\theoremstyle{plain}
\newtheorem{theorem}{Theorem}
\numberwithin{theorem}{section}
\numberwithin{corollary}{section}
\numberwithin{definition}{section}
\newtheorem{lemma}{Lemma}
\numberwithin{lemma}{section}
\numberwithin{proposition}{section}
\numberwithin{remark}{section}
\numberwithin{example}{section}
\numberwithin{equation}{section}
\newcommand {\be}{\begin{equation}}
\newcommand {\ee}{\end{equation}}
\newcommand{\h}{\begin{eqnarray*}}
\newcommand{\e}{\end{eqnarray*}}
\newcommand{\CC}{\mathbf{C}}
\newcommand{\RR}{\mathbf{R}}
\newcommand{\ZZ}{\mathbf{Z}}
\newcommand{\HH}{\mathbf{H}}
\newcommand{\ii}{\sqrt{-1}}
\begin{document}

\title[$E_8$ Bundles and Rigidity]{$E_8$ Bundles and Rigidity}
\author{Fei Han}
\address{Fei Han, Department of Mathematics, National University of Singapore,
 Block S17, 10 Lower Kent Ridge Road,
Singapore 119076 (mathanf@nus.edu.sg)}
\author{Kefeng Liu}
\address{Kefeng Liu, Department of Mathematics, University of California at Los Angeles,
Los Angeles, CA 90095, USA (liu@math.ucla.edu) and  Center of Mathematical Sciences, Zhejiang University, 310027, P.R. China}
\author{Weiping Zhang}
\address{Weiping Zhang, Chern Institute of Mathematics \& LPMC, Nankai
University, Tianjin 300071, P.R. China. (weiping@nankai.edu.cn)}
\maketitle

\begin{abstract} In this paper, we establish rigidity and vanishing theorems for Dirac operators twisted by $E_8$ bundles.

\end{abstract}

\section*{Introduction}
Let $X$ be a closed smooth connected manifold which admits a nontrivial $S^1$ action. Let $P$ be an elliptic differential operator on $X$ commuting with the $S^1$ action. Then the kernel and cokernel of $P$ are finite dimensional representation of $S^1$. The equivariant index of $P$ is the virtual character of $S^1$ defined by
\be \mathrm{Ind}(g, P)=\mathrm{tr}|_g\mathrm{ker}P-\mathrm{tr}|_g\mathrm{coker}P,\ee
for $g\in S^1$. We call that $P$ is $rigid$ with respect to this circle action if $\mathrm{Ind}(g, P)$ is independent of $g$.

It is well known that classical operators: the signature operator for oriented manifolds, the Dolbeault operator for almost complex manifolds and the Dirac operator for spin manifolds are rigid \cite{AH1}. In \cite{W}, Witten considered the indices of Dirac-like operators on the free loop space $LX$. The Landweber-Stong-Ochanine elliptic genus (\cite{LS}, \cite{Och1})  is just the index of one of these operators. Witten conjectured that these elliptic operators should be rigid. See \cite{La} for a brief early history of the subject.  Witten's conjecture were first proved by Taubes \cite{T} and Bott-Taubes \cite{BT}. Hirzebruch \cite{Hir1} and Krichever \cite{Kr} proved Witten's conjecture for almost complex manifold case. Various aspects of mathematics are involved in these proofs. Taubes used analysis of Fredholm operators, Krichever used cobordism, Bott-Taubes and Hirzebruch used Lefschetz fixed point formula. In \cite{Liu1, Liu2}, using modularity, Liu gives simple and unified proof as well as various generalizations of the Witten conjecture. Several new vanishing theorems are also found in \cite{Liu1, Liu2}. Liu-Ma \cite{LM1, LM2} and Liu-Ma-Zhang \cite{LMZ1, LMZ2} established family versions of rigidity and vanishing theorems. 

In this paper, we study rigidity and vanishing properties for Dirac operators twisted by $E_8$ bundles. Let $X$ be an even dimensional closed spin manifold and $D$ the Dirac operator on $X$. Let $P$ be an (compact-)$E_8$ principal bundle over $X$. Let $W$ be the vector bundle over $X$ associated to the complex adjoint representation $\rho$ of $E_8$. The twisted Dirac operator $D^W$ plays a prominent role in string theory and $M$ theory. In \cite{W1}, the index of such twisted operator is discovered as part of the phase of the $M$-theory action. In \cite{DMW}, the partition function in $M$-theory, involving the index theory of an $E_8$ bundle, is compared with the partition function in type IIA string theory described by $K$-theory to test $M$-theory/Type IIA duality. In this paper, we are interested in the equivariant index of the operator $D^W$ and establish rigidity and vanishing theorems for this operator.

More precisely, let $X$ be a $2k$ dimensional closed spin manifold, which admits a nontrivial $S^1$ action. Let $P$ be an (compact-)$E_8$ principal bundle over $X$ such that the $S^1$ action on $X$ can be lifted to $P$ as a left action which commutes with the free action of $E_8$ on $P$. Let $W$ be the complex vector bundle associated to the complex adjoint representation of $E_8$ mentioned above. Then  the $S^1$ action on $P$ naturally induces an action on $W$ by $g\cdot[s, v]=[g\cdot s, v]$, where $[s, v]$ with $s\in P, v\in \CC^{248}$, is the equivalent classes defining the elements in $W$ by the equivalent relations $(s, v)\sim(s\cdot h, \rho(h^{-1})\cdot v)$ for $h\in E_8$. Let $X^{S^1}$ be the fixed point manifold and $\pi$ be the projection from $X^{S^1}$ to a point $pt$. Let $u$ be a fixed generator of $H^2(BS^1, \ZZ)$. We have the following theorem:

\begin{theorem} Assume the action only has isolated fixed points and the restriction of the equivariant characteristic class $\frac{1}{30}c_2(W)_{S^1}-p_1(TX)_{S^1}$ to $X^{S^1}$ is equal to $n\cdot \pi^*u^2$ for some integer $n$. \newline
(i) If $n<0$, then $\mathrm{Ind}(g, D^W)$ is independent of $g$ and equal to $-\mathrm{Ind}(D^{T_\CC X})$, minus the index of the Rarita-Schwinger operator. In particular, one has $\mathrm{Ind}D^W$=$-\mathrm{Ind}D^{T_\CC X}$ and when $k$ is odd, i.e. dim $X\equiv 2 \,(mod \, 4)$, one has $\mathrm{Ind} (g,D^W)\equiv 0.$\newline
(ii) If $n=0$, then $\mathrm{Ind}(g, D^W)$ is independent of $g$. Moreover, when $k$ is odd, one has $\mathrm{Ind}(g, D^W)\equiv 0$.\newline
(iii) If $n=2$ and  $k$ is odd, then $\mathrm{Ind}(g, D^W)\equiv 0$.

\end{theorem}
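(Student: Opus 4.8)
The plan is to combine the Atiyah--Bott--Segal--Singer Lefschetz fixed point formula with Liu's modular-invariance method, the new ingredient being that the $E_8$-bundle contribution is packaged by the theta function of the $E_8$ lattice. Since the fixed points are isolated, I would first write $\mathrm{Ind}(g,D^W)$ with $g=e^{2\pi\ii z}$ as a finite sum over $p\in X^{S^1}$ of local terms built from the rotation weights $m_1^{(p)},\dots,m_k^{(p)}$ of $T_pX$ and from the weights of the adjoint $S^1$-action on $W_p$. The lift of the action to $P$ produces at each $p$ a homomorphism $S^1\to E_8$, i.e.\ an element $\lambda_p$ of the coweight lattice, and the $248$ adjoint weights are then $\langle\alpha,\lambda_p\rangle$ as $\alpha$ runs over the $240$ roots (the minimal vectors of the $E_8$ lattice) together with eight zeros. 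The normalization $\tfrac1{30}$ is present precisely because $30$ is the dual Coxeter number of $E_8$: under it $\tfrac1{30}c_2(W)$ becomes the basic generator of $H^4$, and using $\sum_{\alpha}\langle\alpha,\lambda_p\rangle^2=2\cdot 30\,\langle\lambda_p,\lambda_p\rangle$ the hypothesis $\tfrac1{30}c_2(W)_{S^1}-p_1(TX)_{S^1}=n\,\pi^*u^2$ translates into a clean relation of the form $\langle\lambda_p,\lambda_p\rangle-\sum_j(m_j^{(p)})^2 = n$ at every fixed point.

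Next I would introduce $\tau\in\HH$, set $q=e^{2\pi\ii\tau}$, and pass to the loop-space level, replacing $D^W$ by the family $D\otimes\bigotimes_{n\ge1}S_{q^n}(\widetilde{T_\CC X})\otimes(\text{the }E_8\text{ factor})$, where the $E_8$ factor is chosen so that its fixed-point contribution is $\Theta_{E_8}(\tau,\lambda_p z)/\eta(\tau)^8$ with $\Theta_{E_8}(\tau,\zeta)=\sum_{\alpha\in E_8}q^{\langle\alpha,\alpha\rangle/2}e^{2\pi\ii\langle\alpha,\zeta\rangle}$. Assembling the fixed-point sum produces a function $P(\tau,z)$ whose relevant $q$-coefficient recovers $\mathrm{Ind}(g,D^W)$ together with tangent counterterms. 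The decisive modular input is that $\Theta_{E_8}$ is a Jacobi form of weight $4$ and index $1$ for the \emph{full} group $SL(2,\ZZ)$---no congruence subgroup is needed, exactly because the $E_8$ lattice is even and unimodular---while $\eta^8$ has weight $4$, so $\Theta_{E_8}/\eta^8$ contributes weight $0$ and only an index, whereas the $T_\CC X$ factors contribute the weight. The first genuine step is a holomorphy lemma: using the characteristic-class hypothesis, show that the potential poles of the individual fixed-point terms at torsion values of $z$ cancel, so that $P$ extends holomorphically to $\HH\times\CC$. The bookkeeping of the anomaly exponents then identifies $P$ as a holomorphic weak Jacobi form of weight $k$ and index $n/2$ for $SL(2,\ZZ)$, the index being exactly the combination $\langle\lambda_p,\lambda_p\rangle-\sum_j(m_j^{(p)})^2=n$ from the hypothesis.

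From here the three conclusions are structural facts about such forms. When $n=0$ the index is $0$, so $P$ is independent of $z$; this is rigidity, and the value is read off the constant $q$-term, giving (ii). When $n<0$ the index $n/2$ is negative, so $P\equiv 0$ (the algebra of weak Jacobi forms is nonnegatively graded by index); the vanishing of its relevant $q$-coefficient is precisely the identity $\mathrm{Ind}(g,D^W)=-\mathrm{Ind}(g,D^{T_\CC X})$ minus the Rarita--Schwinger index, which is the order-$q^0$ representation-theoretic content of $\Theta_{E_8}/\eta^8=q^{-1/3}(1+248q+\cdots)$ twisted by the $T_\CC X$ Witten bundle; since the tangent-twisted indices are themselves rigid, so is $\mathrm{Ind}(g,D^W)$, giving (i). For the $k$-odd refinements, note that $-I\in SL(2,\ZZ)$ forces any modular form of odd weight to vanish: when $k$ is odd the tangent counterterms vanish, so in (i) and (ii) one gets $\mathrm{Ind}(g,D^W)\equiv 0$. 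Finally, for (iii), with $n=2$ and $k$ odd, $P$ is a holomorphic weak Jacobi form of index $1$ and \emph{odd} weight $k$; by Eichler--Zagier the index-$1$ weak Jacobi forms are $M_*(SL(2,\ZZ))\,\widetilde\phi_{0,1}\oplus M_*(SL(2,\ZZ))\,\widetilde\phi_{-2,1}$, whose generators all have even weight, so every index-$1$ form has even weight and $P\equiv 0$, whence $\mathrm{Ind}(g,D^W)\equiv 0$.

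The main obstacle is the holomorphy lemma together with the exact weight--index bookkeeping: one must check that the $z$-dependent anomaly exponents coming from $T_\CC X$ and from $\Theta_{E_8}$ combine, under the stated hypothesis, into the single index $n/2$ with no residual poles, and that the $\eta$-normalization makes $P$ transform for the full modular group rather than a proper subgroup. A second computational point, needed only for the explicit value in (i), is the identification of the order-$q^0$ term of the $E_8$ character, twisted by the tangent Witten bundle and multiplied by $\widehat A(TX)$, with $-D^{T_\CC X}$ and the Rarita--Schwinger bundle. Once the invariants $(k,n/2)$ are pinned down, the three statements follow immediately from the $-I$-symmetry (odd-weight vanishing), from index $0$ forcing $z$-independence, and from the even-weight generation of index-$1$ Jacobi forms.
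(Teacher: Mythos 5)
Your proposal is correct and takes essentially the same route as the paper: apply the Lefschetz fixed point formula to the Witten-type operator twisted by the graded pieces of the basic representation of affine $E_8$ (so that the fixed-point contribution is the $E_8$ lattice theta function), translate the hypothesis $\frac{1}{30}c_2(W)_{S^1}-p_1(TX)_{S^1}=n\cdot\pi^*u^2$ into the statement that the fixed-point sum is a holomorphic weak Jacobi form of index $\frac{n}{2}$, and then conclude via the Eichler--Zagier facts (index $0$ forces $t$-independence, negative index forces vanishing, index $1$ with odd weight forces vanishing), using Witten rigidity and degree counting to strip off the tangent counterterms. The remaining discrepancies are pure normalization bookkeeping --- the paper uses $\varphi(\tau)^8$ rather than $\eta(\tau)^8$ (so its form is genuinely $T$-invariant of weight $k+4$ rather than weight $k$ with a multiplier) and works over the lattice $(2\ZZ)^2$ --- and these do not alter the argument.
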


Actually we have established rigidity and vanishing results in more general settings concerning the twisted spin$^c$ Dirac operators.  See  Theorem 2.1 and Theorem 2.2 for details. The above theorem is a corollary of Theorem 2.1. We prove our theorems by studying the modularity of Lefschetz numbers of certain elliptic operators involving the basic representation of the affine Kac-Moody algebra of $E_8$. In the rest of the paper, we will first briefly review the Jacobi theta functions and the basic representation for the affine $E_8$ by following \cite{K1} (see also \cite{K2}) as the preliminary knowledge in Section 1 and then state our theorems as well as give their proofs in Section 2.

\section{Preliminaries}
\subsection{Jacobi theta functions}
The four Jacobi theta-functions are defined as follows (cf. \cite{C}),

\begin{equation}
\theta (z,\tau )=2q^{1/8}\sin (\pi z)\prod_{j=1}^{\infty
}[(1-q^{j})(1-e^{2\pi \sqrt{-1}z}q^{j})(1-e^{-2\pi \sqrt{-1}z}q^{j})],
\end{equation}%
\begin{equation}
\theta _{1}(z,\tau )=2q^{1/8}\cos (\pi z)\prod_{j=1}^{\infty
}[(1-q^{j})(1+e^{2\pi \sqrt{-1}z}q^{j})(1+e^{-2\pi \sqrt{-1}z}q^{j})],
\end{equation}%
\begin{equation}
\theta _{2}(z,\tau )=\prod_{j=1}^{\infty }[(1-q^{j})(1-e^{2\pi \sqrt{-1}%
z}q^{j-1/2})(1-e^{-2\pi \sqrt{-1}z}q^{j-1/2})],
\end{equation}

\begin{equation}
\theta _{3}(z,\tau )=\prod_{j=1}^{\infty }\left[ (1-q^{j})(1+e^{2\pi \sqrt{%
-1}z}q^{j-1/2})(1+e^{-2\pi \sqrt{-1}z}q^{j-1/2})\right] ,
\end{equation}%
where $q=e^{2\pi \sqrt{-1}\tau },\ \tau \in \mathbf{H}$, the upper half
plane.

They are all holomorphic functions for $(z,\tau )\in \mathbf{C\times H}$,
where $\mathbf{C}$ is the complex plane.

Let $\theta ^{\prime }(0,\tau )=\frac{\partial }{\partial z}\theta (z,\tau
)|_{z=0}$. One has the following Jacobi identity (c.f. \cite{C}),
\begin{equation}
\theta ^{\prime }(0,\tau )=\pi \theta _{1}(0,\tau )\theta _{2}(0,\tau
)\theta _{3}(0,\tau ).
\end{equation}

Let
\begin{equation*}
SL(2,\mathbf{Z}):=\left\{ \left. \left(
\begin{array}{cc}
a_{1} & a_{2} \\
a_{3} & a_{4}%
\end{array}%
\right) \right\vert a_{1},a_{2},a_{3},a_{4}\in \mathbf{Z},\
a_{1}a_{4}-a_{2}a_{3}=1\right\}
\end{equation*}
be the modular group. Let $S=\left(
\begin{array}{cc}
0 & -1 \\
1 & 0%
\end{array}%
\right) ,\ T=\left(
\begin{array}{cc}
1 & 1 \\
0 & 1%
\end{array}%
\right) $ be the two generators of $SL(2,\mathbf{Z})$. Their actions on $%
\mathbf{H}$ are given by
\begin{equation*}
S:\tau \mapsto -\frac{1}{\tau },\ \ \ T:\tau \mapsto\tau +1.
\end{equation*}

The actions on theta-functions by $S$ and $T$ are given by the following transformation
formulas (cf. \cite{C}),
\begin{equation}
\theta (z,\tau +1)=e^{\frac{\pi \sqrt{-1}}{4}}\theta (z,\tau ),\ \ \ \theta
\left( z,-{1}/{\tau }\right) ={\frac{1}{\sqrt{-1}}}\left( {\frac{\tau }{%
\sqrt{-1}}}\right) ^{1/2}e^{\pi \sqrt{-1}\tau z^{2}}\theta \left( \tau
z,\tau \right) \ ;  \label{Eqn: theta transformation}
\end{equation}%
\begin{equation}
\theta _{1}(z,\tau +1)=e^{\frac{\pi \sqrt{-1}}{4}}\theta _{1}(z,\tau ),\ \ \
\theta _{1}\left( z,-{1}/{\tau }\right) =\left( {\frac{\tau }{\sqrt{-1}}}%
\right) ^{1/2}e^{\pi \sqrt{-1}\tau z^{2}}\theta _{2}(\tau z,\tau )\ ;
\label{Eqn: theta_1 transformation}
\end{equation}%
\begin{equation}
\theta _{2}(z,\tau +1)=\theta _{3}(z,\tau ),\ \ \ \theta _{2}\left( z,-{1}/{%
\tau }\right) =\left( {\frac{\tau }{\sqrt{-1}}}\right) ^{1/2}e^{\pi \sqrt{-1}%
\tau z^{2}}\theta _{1}(\tau z,\tau )\ ;  \label{Eqn: theta_2 transformation}
\end{equation}%
\begin{equation}
\theta _{3}(z,\tau +1)=\theta _{2}(z,\tau ),\ \ \ \theta _{3}\left( z,-{1}/{%
\tau }\right) =\left( {\frac{\tau }{\sqrt{-1}}}\right) ^{1/2}e^{\pi \sqrt{-1}%
\tau z^{2}}\theta _{3}(\tau z,\tau )\ .  \label{Eqn: theta_3 transformation}
\end{equation}

One also has the following  formulas about how the theta functions vary
along the lattice $\Gamma =\{a+b\tau |a,b\in \mathbf{Z}\}$ (cf. \cite{C}),
\begin{equation}
\theta (z+a,\tau )=(-1)^{a}\theta (z,\tau ), \ \theta (z+b\tau ,\tau )=(-1)^{b}e^{-2\pi \sqrt{-1}bz-\pi \sqrt{-1}b^{2}\tau }\theta (z,\tau );
\end{equation}
\begin{equation}
\theta _{1}(z+a,\tau )=(-1)^{a}\theta _{1}(z,\tau ),\ \theta _{1}(z+b\tau
,\tau )=e^{-2\pi \sqrt{-1}bz-\pi \sqrt{-1}b^{2}\tau }\theta _{1}(z,\tau );
\label{Eqn: theta 1 transformation}
\end{equation}%
\begin{equation}
\theta _{2}(z+a,\tau )=\theta _{2}(z,\tau ),\ \theta _{2}(z+b\tau ,\tau
)=(-1)^{b}e^{-2\pi \sqrt{-1}bz-\pi \sqrt{-1}b^{2}\tau }\theta _{2}(z,\tau );
\label{Eqn: theta 2 transformation}
\end{equation}%
\begin{equation}
\theta _{3}(z+a,\tau )=\theta _{3}(z,\tau ),\ \theta _{3}(z+b\tau ,\tau
)=e^{-2\pi \sqrt{-1}bz-\pi \sqrt{-1}b^{2}\tau }\theta _{3}(z,\tau ).
\label{Eqn: theta 3 transformation}
\end{equation}

\subsection{The basic representation for the affine $E_8$}
In this subsection we  briefly review the basic representation for the affine $E_8$ following \cite{K1} (see also \cite{K2}).

Let $\mathfrak{g}$ be the  (complex) Lie algebra of $E_8$. Let $\langle, \rangle$ be the Killing form on $\mathfrak{g}$.
Let $\widetilde{\mathfrak{g}}$ be the affine Lie algebra corresponding to $\mathfrak{g}$ defined by
$$\widetilde{\mathfrak{g}}=\mathbf{C}[t, t^{-1}]\otimes \mathfrak{g}\oplus \CC c, $$ with bracket
$$[P(t)\otimes x+\lambda c, Q(t)\otimes y+\mu c]=P(t)Q(t)\otimes [x,y]+\langle x, y\rangle\,\mathrm{Res}_{t=0}\left(\frac{dP(t)}{dt}Q(t)\right)c.$$

Let $\widehat{\mathfrak{g}}$ be the affine Kac-Moody algebra obtained from $\widetilde{\mathfrak{g}}$ by adding a derivation $t\frac{d}{dt}$ which operates on $\mathbf{C}[t, t^{-1}]\otimes \mathfrak{g}$ in an obvious way and sends $c$ to $0$.

The basic representation $V(\Lambda_0)$ is the $\widehat{\mathfrak{g}}$-module defined by the property that there is a nonzero vector $v_0$ (highest weight vector) in $V(\Lambda_0)$ such that $cv_0=v_0, (\mathbf{C}[t]\otimes \mathfrak{g}\oplus\CC t\frac{d}{dt})v_0=0$. Setting $V_i:=\{v\in V(\Lambda_0)| t\frac{d}{dt}v=-iv\}$ gives a $\ZZ_+$-gradation by finite dimensional subspaces. Since $[\mathfrak{g},t\frac{d}{dt}]=0$, each $V_i$ is a representation of $\mathfrak{g}$. Moreover, $V_1$ is the adjoint representation of $E_8.$

Fix a basis $\{Z_i\}_{i=1}^8$ for the Cartan subalgebra. The character of the basic representation is given by
\be \mathrm{ch}(z_1, z_2,\cdots,z_8,\tau):=\sum_{i=0}^{\infty}(\mathrm{ch}V_i)(z_1, z_2,\cdots, z_8)q^i=\varphi(\tau)^{-8}\Theta_{\mathfrak{g}}(z_1, z_2,\cdots, z_8, \tau),\ee
where $\varphi(\tau)=\prod_{n=1}^\infty (1-q^n)$ so that $\eta(\tau)=q^{1/24}\varphi(\tau)$ is the Dedekind $\eta$ function; $\Theta_{\mathfrak{g}}(z_1, z_2,\cdots, z_8, \tau)$ is the theta function defined on the root lattice $Q$ by
\be \Theta_{\mathfrak{g}}(z_1, z_2,\cdots, z_8, \tau)=\sum_{\gamma\in Q}q^{|\gamma|^2/2}e^{2\pi\ii \gamma(\sum_{l=1}^8 z_l Z_l)}.\ee

It is proved in \cite{GL} (cf. \cite{Har}) that there is a basis for the $E_8$ root lattice such that
\be
\Theta_{\mathfrak{g}}(z_1, \cdots. z_8,\tau)
=\frac{1}{2}\left( \prod_{l=1}^8\theta(z_l,\tau)+\prod_{l=1}^8\theta_1(z_l,\tau)+\prod_{l=1}^8\theta_2(z_l,\tau)+\prod_{l=1}^8\theta_3(z_l,\tau)\right).
\ee

\section{$E_8$ Bundles and Rigidity}
In this section we prove two rigidity and vanishing theorems for spin$^c$ manifolds with $E_8$ principal bundles. Theorem 0.1 is deduced from the first one (Theorem 2.1). 

Let $X$ be a $2k$ dimensional closed spin$^c$ manifold, which admits a nontrivial $S^1$ action that preserves the spin$^c$ structure. Let $L$ be the complex line bundle associated with the spin$^c$ structure of $X$. It's the associated line bundle of the $U(1)$-bundle $Q/spin(2k)\rightarrow Q/spin^c(2k)\cong X$, where $Q$ is the $spin^c(2k)$ principal bundle over $X$ determined by the $spin^c$ structure. We denote the first equivariant Chern class of $L$ by $c_1(X)_{S^1}$.  Let $P$ be an $E_8$ principal bundle over $X$ such that the $S^1$ action on $X$ can be lifted to $P$ as a left action which commutes with the free action of $E_8$ on $P$. Let $W$ be the vector bundle associated to the complex adjoint representation of $E_8$ mentioned above. Then  the $S^1$ action on $P$ naturally induces an action on $W$ as described in the introduction.

Let $g^{TX}$ be a Riemannian metric on $X$. Let $\nabla^{TX}$ be the
Levi-Civita connection associated to $g^{TX}$. Denote the complexification of $TX$ by $T_\CC X$. Let $g^{T_{\mathbf{C}}X}$ and
$\nabla^{T_{\mathbf{C}}X}$ be the induced Hermitian metric and Hermitian
connection on $T_{\mathbf{C}}X$. Let $h^L$ be a Hermitian metric on $L$ and $\nabla^L$ be a Hermitian connection. Let $\overline{L}$ be the complex conjugate of $L$ with the induced Hermitian metric and connection. Assume that the $S^1$ action on $X$ preserves the metrics and connections involved. Let $S_c(TX)=S_{c,+}(TX)\oplus S_{c,-}(TX)$ denote the bundle of spinors
associated to the spin$^c$ structure, $(TX,g^{TX})$ and $(L,h^L)$. Then $%
S_c(TX)$ carries induced Hermitian metric and connection preserving the
above $\mathbf{Z}_2$-grading. Let $D_{c,\pm}:\Gamma(S_{c,\pm}(TX))%
\rightarrow \Gamma(S_{c,\mp}(TX))$ denote the induced spin$^c$ Dirac
operators (cf. \cite{LM}). If $V$ is an equivariant complex vector bundle over $X$ with equivariant Hermitian metric $h^V$ and Hermitian connection $\nabla^V$, let $D_{c,\pm}^V:\Gamma(S_{c,\pm}(TX)\otimes V)
\rightarrow \Gamma(S_{c,\mp}(TX)\otimes V)$ denote the induced twisted spin$^c$ Dirac
operators.

\begin{theorem} Assume the action only has isolated fixed points and the restriction of the equivariant characteristic class 
$$\frac{1}{30}c_2(W)_{S^1}+3c_1(X)_{S^1}^2-p_1(TX)_{S^1}$$ 
to $X^{S^1}$ is equal to $n\cdot \pi^*u^2$ for some integer $n$. \newline
(i) If $n<0$, then
$$\mathrm{Ind}(g, D_{c,+}^{(1+\overline{L})\otimes W})+\mathrm{Ind}(g, D_{c,+}^{(1+\overline{L})\otimes (T_\CC X-(L^2+\overline{L}^2)+(L+\overline{L}))})\equiv 0.$$ In particular,
 $$\mathrm{Ind}D_{c,+}^{(1+\overline{L})\otimes W}+\mathrm{Ind}D_{c,+}^{(1+\overline{L})\otimes (T_\CC X-(L^2+\overline{L}^2)+(L+\overline{L}))}=0.$$
(ii) If $n=0$, then
$$\mathrm{Ind}(g, D_{c,+}^{(1+\overline{L})\otimes W})+\mathrm{Ind}(g, D_{c,+}^{(1+\overline{L})\otimes (T_\CC X-(L^2+\overline{L}^2)+(L+\overline{L}))})$$
is independent of $g$. Moreover, when $k$ is odd, one has $$\mathrm{Ind}(g, D_{c,+}^{(1+\overline{L})\otimes W})+\mathrm{Ind}(g, D_{c,+}^{(1+\overline{L})\otimes (T_\CC X-(L^2+\overline{L}^2)+(L+\overline{L}))})\equiv 0.$$
(iii) If $n=2$ and  $k$ is odd, then $$\mathrm{Ind}(g, D_{c,+}^{(1+\overline{L})\otimes W})+\mathrm{Ind}(g, D_{c,+}^{(1+\overline{L})\otimes (T_\CC X-(L^2+\overline{L}^2)+(L+\overline{L}))})\equiv 0.$$

\end{theorem}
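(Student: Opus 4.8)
The plan is to exhibit the sum of the two equivariant indices as a single graded coefficient of a Jacobi-form-valued Lefschetz number and then read off rigidity and vanishing from the modular and analytic properties of that form. First I would package the $E_8$ datum and the tangential datum into one virtual-bundle-valued $q$-series. The basic representation of the affine $E_8$ gives, through its character $\varphi(\tau)^{-8}\Theta_{\mathfrak g}$, the $q$-series of representations $\sum_{i\ge 0}V_i\,q^i$ with $V_0=\CC$ and $V_1$ the adjoint; applying the associated-bundle construction to $P$ converts it into a series $\mathcal V^{E_8}(\tau)$ of virtual bundles with $\mathcal V^{E_8}_0=1$ and $\mathcal V^{E_8}_1=W$. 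I would tensor this with the standard Witten bundle $\mathcal V^{TX}(\tau)$ attached to $T_\CC X$ and the spin$^c$ line bundle $L$, normalized so that $\mathcal V^{TX}_0=1$ and $\mathcal V^{TX}_1=T_\CC X-(L^2+\overline L^2)+(L+\overline L)$, and set $\Theta(\tau)=(1+\overline L)\otimes\mathcal V^{E_8}(\tau)\otimes\mathcal V^{TX}(\tau)$. Expanding $\mathrm{Ind}(g,D_{c,+}^{\Theta(\tau)})$ in $q$, the coefficient of $q^1$ is exactly $\mathrm{Ind}(g,D_{c,+}^{(1+\overline L)\otimes W})+\mathrm{Ind}(g,D_{c,+}^{(1+\overline L)\otimes(T_\CC X-(L^2+\overline L^2)+(L+\overline L))})$, so it suffices to control the whole series $P(t,\tau):=\mathrm{Ind}(g,D_{c,+}^{\Theta(\tau)})$ with $g=e^{2\pi\ii t}$.

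Next I would evaluate $P(t,\tau)$ by the Atiyah--Bott--Segal--Singer Lefschetz fixed point formula. Since the fixed points are isolated, this produces a finite sum over fixed points $p$ of ratios of the theta functions $\theta,\theta_1,\theta_2,\theta_3$, the four-term factorization of $\Theta_{\mathfrak g}$ recorded in Section 1 supplying the $E_8$ factor, and the theta-arguments being linear in $t$ with coefficients the tangent rotation numbers at $p$ together with the $S^1$-weights on $W|_p$ and $L|_p$. Using the modular transformation formulas and the lattice quasi-periodicities of the theta functions from Section 1, I would then show that $P(t,\tau)$ is a Jacobi form: it is invariant under $\tau\mapsto\tau+1$ because the $e^{\pi\ii/4}$ factors cancel in the balanced construction, while under $\tau\mapsto-1/\tau$ it satisfies a functional equation of the form $P(t/\tau,-1/\tau)=\tau^{k}e^{2\pi\ii\,n\,t^2/\tau}P(t,\tau)$. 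The coefficient in the exponent is precisely the value at the fixed points of the quadratic anomaly class $\frac{1}{30}c_2(W)_{S^1}+3c_1(X)_{S^1}^2-p_1(TX)_{S^1}$: the $e^{\pi\ii\tau z^2}$ factors from the $E_8$ theta functions assemble into $\frac{1}{30}c_2(W)_{S^1}$, with the factor $\frac{1}{30}$ the inverse dual Coxeter number of $E_8$, those from the spin$^c$ and $L$-correction factors into $3c_1(X)_{S^1}^2$, and those from $T_\CC X$ into $-p_1(TX)_{S^1}$. By hypothesis this class restricts to $n\cdot\pi^*u^2$, so $P$ has Jacobi index $n$.

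The crucial and hardest step is to prove that $P(t,\tau)$ is \emph{holomorphic} in $t$. A priori the fixed point sum is only meromorphic, with possible poles at the torsion points where some denominator factor $\theta(m_j(p)t,\tau)$ vanishes. Here I would run the Bott--Taubes--Liu transfer argument: splitting the $E_8$ factorization into the pairs $\{\theta,\theta_1\}$ and $\{\theta_2,\theta_3\}$ produces two Lefschetz numbers whose potential poles lie at disjoint sets of torsion points and which are interchanged, up to the automorphy factor above, by $\tau\mapsto-1/\tau$; since each is therefore regular wherever the other could be singular, both are pole-free, and $P(t,\tau)$ is holomorphic. This is where the isolated-fixed-point hypothesis and the precise bookkeeping of rotation numbers are indispensable. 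Granting holomorphicity, $P$ is a holomorphic Jacobi form of weight $k$ and index $n$, and the three cases follow from the structure theory of such forms. For $n=0$ a holomorphic Jacobi form of index $0$ is independent of $t$, so each $q$-coefficient, in particular the $q^1$-coefficient, is independent of $g$, giving (ii). For $n<0$ there is no nonzero holomorphic Jacobi form of negative index, forcing $P\equiv 0$ and hence the vanishing in (i). Finally, for $k$ odd a reality/parity symmetry of the construction (interchanging $L$ and $\overline L$) forces $P$ to pick up a factor $(-1)^k$, so $P$ equals its own negative and vanishes; this yields the $k$-odd assertions in (ii) and the case (iii). Extracting the $q^1$-coefficient through the identification of the first paragraph then gives Theorem 2.1.
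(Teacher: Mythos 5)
Your overall architecture --- encoding the affine $E_8$ character and a Witten-type series for $T_\CC X$ and $L$ into one family of twisted spin$^c$ Dirac operators, computing its equivariant index by the fixed point formula as a theta-function expression, and then invoking modularity plus the structure theory of Jacobi forms --- is the same as the paper's, and your handling of $n<0$ and of rigidity for $n=0$ is sound in outline. However, your key analytic step, holomorphicity in $t$, is argued incorrectly. The potential poles of the fixed-point sum come from the denominator factors $\theta(\alpha_j t,\tau)$ attached to the tangent rotation numbers at each fixed point; these denominators are common to all four summands of the $E_8$ identity (1.16), so splitting the numerator into the pairs $\{\theta,\theta_1\}$ and $\{\theta_2,\theta_3\}$ produces two expressions with the same, not disjoint, polar sets, and the proposed ``transfer'' collapses. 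The paper's actual mechanism has two stages that you are missing: (a) holomorphicity on $\RR\times\HH$ (Lemma 2.1, following Liu), which holds because for real $t$ each $q$-coefficient of the equivariant index is a finite sum of characters $\sum_j a_j e^{2\pi\ii jt}$, hence entire in $t$; and (b) a modular argument: any polar divisor has the form $t=m(c\tau+d)/l$, and a suitable element of $SL(2,\ZZ)$ transforms $I$ by an entire automorphy factor (by (2.10)--(2.11)) while moving that divisor to the real point $t=m/l$, contradicting (a). Without the real-$t$ statement there is no anchor for any argument of this kind.

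Your case (iii) is also broken. When $n=2$ the Jacobi form has nonzero index, so it is genuinely $t$-dependent, and the parity relation $P(-t,\tau)=(-1)^kP(t,\tau)$ cannot by itself force vanishing --- a nonzero odd function of $t$ is perfectly possible. (That parity argument does legitimately dispatch the $k$-odd part of case (ii), since $n=0$ already gives independence of $t$; there it is a valid alternative to the paper's Atiyah--Singer degree count.) What (iii) actually requires is the Eichler--Zagier theorem that a (weak) Jacobi form of index $1$ and odd weight vanishes identically (Lemma 2.3 in the paper), whose proof rests on the theta decomposition of index-one Jacobi forms, i.e., precisely the extra structure your bare parity argument lacks. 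Relatedly, your bookkeeping is off in a way that matters here: over the lattice $(2\ZZ)^2$ the transformation under $S$ carries the factor $e^{\pi\ii nt^2/\tau}$ (not $e^{2\pi\ii nt^2/\tau}$) and weight $k+4$ (not $k$), so the form has index $n/2$; it is exactly $n/2=1$ when $n=2$, together with the fact that $k+4$ and $k$ have the same parity, that makes the Eichler--Zagier theorem applicable. A final small point: the $q^1$-coefficient of your series is not exactly the displayed sum of the two indices but differs from it by $-(2k+8)$ times the $q^0$-term $\mathrm{Ind}(g,D_{c,+}^{(1+\overline L)})$; this is harmless because the whole series is controlled at once, but it must be stated to extract the theorem correctly.
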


\begin{proof}

Let $g=e^{2\pi \sqrt{-1}t}\in S^1$ be the generator of the action group. Let $X^{S^1}=\{p\}$ be the set of fixed points. Let $TX|p=E_1\oplus \cdots\oplus E_k$ be the decomposition of the tangent bundle into the $S^1$-invariant 2-planes. Assume that $g$ acts on $E_j$ by $e^{2\pi \sqrt{-1}\alpha_j t}, \alpha_j\in \ZZ$. Assume $g$ acts on $L|_p$ by $e^{2\pi \sqrt{-1}c t}, c\in \ZZ$. Clearly, \be p_1(TM|_p)_{S^1}=(2\pi\sqrt{-1})^2\sum_{j=1}^k{\alpha_j}^2 t^2, \ c_1(L|_p)_{S^1}=2\pi\sqrt{-1} ct.\ee

Denote $L\oplus\overline{L}$ by $L_\CC$.  If $E$ is a complex vector bundle over $X$, set $\widetilde{E}=E-\mathbf{C}^{\mathrm{rk}(E)}\in
K(X)$.

Let $\Theta(X, L, \tau)$ be the virtual
complex vector bundle over $X$ defined by
\begin{equation*}
\begin{split}
\Theta(X, L, \tau):=& \left( \overset{%
\infty }{\underset{m=1}{\otimes }}S_{q^{m}}(\widetilde{T_{\mathbf{C}}X})\right) \otimes \left( \overset{\infty }{\underset{u=1}{\otimes }}\Lambda
_{q^{u}}(\widetilde{L_\CC})\right) \\
& \otimes \left( \overset{\infty }{\underset{v=1}{\otimes }}\Lambda
_{-q^{v-1/2}}(\widetilde{L_\CC})\right) \otimes
\left( \overset{\infty }{\underset{w=1}{\otimes }}\Lambda _{q^{w-1/2}}(\widetilde{L_\CC})\right) ,
\end{split}%
\end{equation*}

Let $W_i \ (i=0,1.\cdots)$ be the associated bundles $P\times_{\rho_i}V_i$, where $V_i$'s are the representations of $E_8$ as in \S 1.2. Then $W=W_1$.

Consider the twisted operator
\be D_{c,+}^{(1+\overline{L})\otimes\Theta(X, L, \tau)\otimes (\varphi^8(\tau)\sum_{i=0}^\infty W_iq^i)}.\ee Expanding $q$-series, we have
\be
\begin{split}
&\Theta(X, L, \tau)\otimes (\varphi^8(\tau)\sum_{i=0}^\infty W_iq^i)\\
=&(1+(T_\CC X-2k)q+O(q^2))\otimes(1+\widetilde{L_\CC}q+O(q^2))\\
&\otimes(1-\widetilde{L_\CC}q^{1/2}-2\widetilde{L_\CC}q+O(q^{3/2}))\otimes(1+\widetilde{L_\CC}q^{1/2}-2\widetilde{L_\CC}q+O(q^{3/2}))\\
&\otimes (1-8q+O(q^2))\otimes (1+Wq+O(q^2))\\
=&1+(W-8+T_\CC X-2k-3\widetilde{L_\CC}-\widetilde{L_\CC}\otimes\widetilde{L_\CC})q+O(q^2).
\end{split}
\ee

It's not hard to see that $\widetilde{L_\CC}\otimes\widetilde{L_\CC}=L^2+\overline{L}^2-4(L+\overline{L})+6.$
So
\be
\begin{split}
&D_{c,+}^{(1+\overline{L})\otimes\Theta(M, L, \tau)\otimes (\varphi^8(\tau)\sum_{i=0}^\infty W_iq^i)}\\
=& D_{c,+}^{(1+\overline{L})}+D_{c,+}^{(1+\overline{L})\otimes(W+T_\CC X-(L^2+\overline{L}^2)+(L+\overline{L})-8-2k)}q+O(q^2).
\end{split}
\ee

By the Atiyah-Bott-Segal-Singer Letschetz fixed point formula, for the twisted operator $ D_{c,+}^{(1+\overline{L})\otimes\Theta(X, L, \tau)\otimes (\varphi^8(\tau)\sum_{i=0}^\infty W_iq^i)}$, the equivariant index
\be
\begin{split}
I(t, \tau)=&2\sum_p \left\{\frac{1}{(2\pi \sqrt{-1})^k}\prod_{j=1}^k\frac{\theta'(0, \tau)}{\theta(\alpha_j t,\tau)}
\frac{\theta_1(ct,\tau)}{\theta_1(0,\tau)}\frac{\theta_2(ct,\tau)}{\theta_2(0,\tau)}\frac{\theta_3(ct,\tau)}{\theta_3(0,\tau)}\right.\\
 &\ \ \ \ \ \ \ \ \ \ \left.\cdot \varphi^8(\tau)\cdot \left(\sum_{i=0}^\infty \mathrm{ch}(W_i|_p)_{S^1}q^i\right)\right\}.
\end{split}
\ee

On the fixed point $p$, fixing an element $s\in P|p$, one can define a map $f_{s}: S^1\to E_8$ by $g\cdot s=s\cdot f_{s}(g)$. It's not hard to check that $f_{s}$ is a group homomorphism. Moreover, for $h\in E_8$, we have
$$g\cdot (s\cdot h)=(g\cdot s)\cdot h=s\cdot f_{s}(g)\cdot h=(s\cdot h)\cdot (h^{-1}f_s(g)h).$$ As all the maximal tori in $E_8$ are conjugate, then one may choose $s\in P|p$ such that $f_s: S^1\to E_8$ maps $S^1$ into the maximal torus $\mathfrak{t}$ that corresponds to the Cartan subalgebra such that the theta function $\Theta_\mathfrak{g}(z_1, \cdots, z_8, \tau)$ appears as in (1.16).
For any unitary representation $\rho: E_8\to U(N)$, let $\mathfrak{T}$ be a maximal torus of $U(N)$ that contains $\rho(\mathfrak{t})$. Let
$$\xymatrix@C=0.5cm{\widehat{\mathfrak{T}}\ar[r]^{\widehat{\rho}}& \widehat{\mathfrak{t}}\ar[r]^{\widehat{f_s}} &\widehat{S^1}}$$ be the induced maps on the character groups. Assume $\widehat{f_s}(z_i)=\beta_i t$. Let $\{x_i\}$ are basis for $\widehat{\mathfrak{T}}$. By definition,
$$(\mathrm{ch}\rho)(z_1, z_2,\cdots, z_8)=\sum_{i=1}^N e^{\widehat{\rho}(x_i)},$$ and therefore
\h
\begin{split}
&(\mathrm{ch}\rho)(\beta_1t, \beta_2 t,\cdots, \beta_8 t)\\
=&\widehat{f_s}((\mathrm{ch}\rho)(z_1, z_2,\cdots, z_8))\\
=&\sum_{i=1}^N e^{(\widehat{f_s}\circ\widehat{\rho})(x_i)}\\
=&\mathrm{ch}((P\times_\rho \CC^N)|_p)_{S^1}.
\end{split}
\e
So for each $i$, we have $ \mathrm{ch}(W_i|p)_{S^1}=(\mathrm{ch}V_i)(\beta_1t, \beta_2t,\cdots, \beta_8t)$. Then by (1.14) and (1.16), we have
\be
\begin{split}
&\varphi^8(\tau)\cdot \left(\sum_{i=0}^\infty \mathrm{ch}(W_i|_p)_{S^1}q^i\right)\\
=&\frac{1}{2}\left( \prod_{l=1}^8\theta(\beta_l t,\tau)+\prod_{l=1}^8\theta_1(\beta_l t,\tau)+\prod_{l=1}^8\theta_2(\beta_l t,\tau)+\prod_{l=1}^8\theta_3(\beta_l t,\tau)\right).
\end{split}
\ee
Comparing both sides of (2.6), we can see by direct computation that
\be 30\cdot (2\pi\sqrt{-1})^2\sum_{l=1}^8\beta_l^2t^2=c_2(W|_p)_{S^1}.\ee

By (2.5) and (2.6), we have
\be
\begin{split}
I(t, \tau)=&\sum_p \left\{\frac{1}{(2\pi \sqrt{-1})^k}\prod_{j=1}^k\frac{\theta'(0, \tau)}{\theta(\alpha_j t,\tau)}
\frac{\theta_1(ct,\tau)}{\theta_1(0,\tau)}\frac{\theta_2(ct,\tau)}{\theta_2(0,\tau)}\frac{\theta_3(ct,\tau)}{\theta_3(0,\tau)}\right.\\
 &\ \ \ \ \ \ \ \ \ \left.\cdot \left( \prod_{l=1}^8\theta(\beta_l t,\tau)+\prod_{l=1}^8\theta_1(\beta_l t,\tau)+\prod_{l=1}^8\theta_2(\beta_l t,\tau)+\prod_{l=1}^8\theta_3(\beta_l t,\tau)\right)\right\}.
\end{split}
\ee

From the transformation laws of theta functions (1.10)-(1.13), for $a, b\in 2\ZZ$, it's not hard to see that
$$ I(t+a\tau+b, \tau)=e^{-\pi\sqrt{-1}(\sum_{l=1}^8\beta_l^2+3c^2-\sum_{j=1}^km_j^2)(b^2\tau+2b\tau)}I(t, \tau).
$$
Since when restricted to fixed points, $\frac{1}{30}c_2(W)_{S^1}+3c_1(L)_{S^1}^2-p_1(TX)_{S^1}$ is equal to $n\cdot \pi^*u^2$, then for each fixed point, from (2.1) and (2.7) we have
$$\sum_{l=1}^8\beta_l^2+3c^2-\sum_{j=1}^k\alpha_j^2=n$$ and therefore
\be I(t+a\tau+b, \tau)=e^{-\pi\sqrt{-1}n(b^2\tau+2b\tau)}I(t, \tau). \ee

It's easy to deduce from (1.6) that
$$
\theta'(0,\tau +1)=e^{\frac{\pi \sqrt{-1}}{4}}\theta'(0,\tau ),\ \ \ \theta'
\left( 0,-{1}/{\tau }\right) ={\frac{1}{\sqrt{-1}}}\left( {\frac{\tau }{%
\sqrt{-1}}}\right) ^{1/2}\tau\theta' \left(0,\tau \right).
$$
Using the above two formulas and the transformation laws of theta functions (1.6)-(1.9), we have

\be I(t, \tau+1)=I(t, \tau)
\ee
and 
\be I\left(\frac{t}{\tau}, -\frac{1}{\tau}\right)=\tau^{k+4}e^{\frac{\pi\sqrt{-1}\left(\sum_{l=1}^8\beta_l^2+3c^2-\sum_{j=1}^k\alpha_j^2\right)t^2}{\tau}}I(t, \tau)=\tau^{k+4}e^{\frac{\pi\sqrt{-1}nt^2}{\tau}}I(t, \tau).
\ee
(2.9)-(2.11) tell us that $I(t, \tau)$ obeys the transformation laws that a Jacobi form (see \cite{EZ}) should satisfy.

Next we shall prove that $I(t, \tau)$ is holomorphic for $(t,\tau)\in \CC\times \HH$. First, we have the following lemma:
\begin{lemma} $I(t, \tau)$ is holomorphic for $(t,\tau)\in \RR\times \HH$.

\end{lemma}
\noindent The proof of this lemma is almost verbatimly same as the proof of Lemma 1.3 in \cite{Liu1}. We shall prove that $I(t, \tau)$ is actually holomorphic on $\CC\times \HH$. The possible polar divisor of $I(t,\tau)$ can be written in the form $t=\frac{m(c\tau+d)}{l}$ for integers $m, l, c, d$ with $(c,d)=1$. Assume $\frac{m(c\tau+d)}{l}$ is a pole for $I(t, \tau)$. Find integers $a,b$ such that $ad-bc=1$. Consider the function $I\left(\frac{t}{-c\tau+a}, \frac{d\tau-b}{-c\tau+a}\right)$. By (2.10) and (2.11), it's easy to see that
\be
I\left(\frac{t}{-c\tau+a}, \frac{d\tau-b}{-c\tau+a}\right)=f(t,\tau)\cdot I(t, \tau),
\ee
where $f(t,\tau)$ is an entire function of $t$ for every $\tau\in \HH$. If $\tau'=\frac{a\tau+b}{c\tau+d}$, then $\tau=\frac{d\tau'-b}{-c\tau'+a}$ and $\frac{m\left( c\frac{d\tau'-b}{-c\tau'+a}+d\right)}{l}$ is a pole for the function $I\left(t, \frac{d\tau'-b}{-c\tau'+a}\right)$.
However by (2.12), we have
\h
\begin{split}
&I\left(\frac{m\left( c\frac{d\tau'-b}{-c\tau'+a}+d\right)}{l}, \frac{d\tau'-b}{-c\tau'+a}\right)\\
=&I\left(\frac{\frac{m}{l}}{-c\tau'+a},  \frac{d\tau'-b}{-c\tau'+a}\right)\\
=&f\left(\frac{m}{l},\tau'\right)\cdot I\left(\frac{m}{l},\tau'\right).
\end{split}
  \e
As $\frac{m}{l}$ is real, by Lemma 2.1, we get a contradiction. Therefore $I(t, \tau)$ is holomorphic for $(t,\tau)\in \CC\times \HH.$

Combining the transformation formulas (2.9)-(2.11) and the holomorphicity of $I(t,\tau)$ on $\CC\times \HH$,  we see that $I(t, \tau)$ is a weak Jacobi form of index $\frac{n}{2}$ and weight $k+4$ over $(2\ZZ)^2\rtimes SL(2, \ZZ)$. Here by weak Jacobi form, we don't require the regularity condition at the cusp but only require that at the cusp $q$ appears with nonnegative powers only. We refer to \cite{EZ} for the precise definition of the Jacobi forms.

If $n=0$, by (2.9), we see that $I(t, \tau)$ is holomorphic on the torus $$\CC/2\ZZ+2\ZZ\tau$$ and therefore must be independent of $t$.  So, by (2.4), we see that
$$\mathrm{Ind}(g, D_{c,+}^{(1+\overline{L})}),$$
$$\mathrm{Ind}(g, D_{c,+}^{(1+\overline{L})\otimes(W+T_\CC X-(L^2+\overline{L}^2)+(L+\overline{L})-8-2k)})$$
are both independent of $g$. So
$$\mathrm{Ind}(g, D_{c,+}^{(1+\overline{L})\otimes W})+\mathrm{Ind}(g, D_{c,+}^{(1+\overline{L})\otimes (T_\CC X-(L^2+\overline{L}^2)+(L+\overline{L}))})$$
must be independent of $g$. The index density of the operator $$D_{c,+}^{(1+\overline{L})\otimes W}+D_{c,+}^{(1+\overline{L})\otimes (T_\CC X-(L^2+\overline{L}^2)+(L+\overline{L}))}$$ involves the characteristic forms 
$$\widehat{A}(TM), e^{c_1(L)/2}(1+e^{-c_1(L)}), \mathrm{ch}(W), \mathrm{ch}(T_\CC M), \mathrm{ch}(L+\overline{L}), \mathrm{ch}(L^2+\overline{L}^2), $$
which are all of degree $4l$ (noting that $W$ is the complexification of the real adjoint representation of compact $E_8$). Therefore by the Atiyah-Singer index theorem,
$\mathrm{Ind}D_{c,+}^{(1+\overline{L})\otimes W}+\mathrm{Ind}D_{c,+}^{(1+\overline{L})\otimes (T_\CC X-(L^2+\overline{L}^2)+(L+\overline{L}))}$ (i.e. when $g=id$) must be 0 when the dimension of the manifold is not divisible by 4.  So when $k$ is odd,
$$\mathrm{Ind}(g, D_{c,+}^{(1+\overline{L})\otimes W})+\mathrm{Ind}(g, D_{c,+}^{(1+\overline{L})\otimes (T_\CC X-(L^2+\overline{L}^2)+(L+\overline{L}))})\equiv 0.$$ This finishes the proof of part (ii).

If $n\neq 0$, i.e in the case of nonzero anomaly, we need the following two lemmas.
\begin{lemma}[\protect Theorem 1.2 in \cite{EZ}] Let $I$ be a weak Jacobi form of index $m$ and weight $h$. Then for fixed $\tau$, if not identically 0, $I$ has exactly 2m zeros in any fundamental domain for the action of the lattice on $\CC$.

\end{lemma}

\begin{lemma}[\protect Theorem 2.2 in \cite{EZ}] Let $I$ be a weak Jacobi form of index $m$ and weight $h$. If $m=1$ and $h$ is odd, then $I$ is identically 0.

\end{lemma}

We would like to point that Lemma 2.2 and Lemma 2.3 are stated in \cite{EZ} for Jacobi forms. However, as in the proofs of them no regularity condition at the cusp are used, we state them here for weak Jacobi forms. See \cite{EZ} for details. 

If $n<0$, then by Lemma 2.2, $I(t, \tau)\equiv 0$, therefore
$$\mathrm{Ind}(g, D_{c,+}^{(1+\overline{L})})\equiv 0,$$
$$\mathrm{Ind}(g, D_{c,+}^{(1+\overline{L})\otimes(W+T_\CC X-(L^2+\overline{L}^2)+(L+\overline{L})-8-2k)})\equiv 0.$$
So part (i) follows.

If $n=2$, as the the the weight of $I(t,\tau)$ is $k+4$, so part (iii) similarly follows clearly from Lemma 2.3.

\end{proof}

Theorem 0.1 can be easily deduced from Theorem 2.1 as follows. 

$ \, $

\noindent {\it Proof of Theorem 0.1:} When $X$ is a spin manifold, $L$ is trivial and $D_{c,+}=D$.  By the Atiyah-Hirzebruch vanishing theorem (\cite{AH1}), we have $\mathrm{Ind}(g, D)\equiv 0$. Moreover by the Witten rigidity theorem (\cite{T, BT, Liu1}, the operator $D^{T_\CC X}$ is rigid. i.e. $\mathrm{Ind}(g, D^{T_\CC X})\equiv \mathrm{Ind}D^{T_\CC X}$. Also note that $\mathrm{Ind}D^{T_\CC X}$ equals to 0 when $k$ is odd. Then the three parts in Theorem 0.1 easily follow from the corresponding three parts in Theorem 2.1. $\square$

$\, $

For Spin$^c$ manifolds, we have rigidity and vanishing theorem for another type of twisted operators.
\begin{theorem}Assume the action only has isolated fixed points and the restriction of the equivariant characteristic class 
$$\frac{1}{30}c_2(W)_{S^1}+c_1(X)_{S^1}^2-p_1(TX)_{S^1}$$ 
to $X^{S^1}$ is equal to $n\cdot \pi^*u^2$ for some integer $n$. \newline
(i) If $n<0$, then
$$\mathrm{Ind}(g, D_{c,+}^{(1-\overline{L})\otimes W})+\mathrm{Ind}(g, D_{c,+}^{(1-\overline{L})\otimes (T_\CC X-(L+\overline{L}))})\equiv 0.$$
In particular,
 $$\mathrm{Ind}D_{c,+}^{(1-\overline{L})\otimes W}+\mathrm{Ind} D_{c,+}^{(1-\overline{L})\otimes (T_\CC X-(L+\overline{L}))}=0.$$
(ii) If $n=0$, then
$$\mathrm{Ind}(g, D_{c,+}^{(1-\overline{L})\otimes W})+\mathrm{Ind}(g, D_{c,+}^{(1-\overline{L})\otimes (T_\CC X-(L+\overline{L}))}) $$
is independent of $g$. Moreover, when $k$ is even, one has $$\mathrm{Ind}(g, D_{c,+}^{(1-\overline{L})\otimes W})+\mathrm{Ind}(g, D_{c,+}^{(1-\overline{L})\otimes (T_\CC X-(L+\overline{L}))}) \equiv 0.$$
(iii) If $n=2$ and  $k$ is even, then $$\mathrm{Ind}(g, D_{c,+}^{(1-\overline{L})\otimes W})+\mathrm{Ind}(g, D_{c,+}^{(1-\overline{L})\otimes (T_\CC X-(L+\overline{L}))})\equiv 0. $$

\end{theorem}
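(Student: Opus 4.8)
The plan is to run the same modular-invariance argument used to prove Theorem 2.1, but built on a twisting bundle whose $L$-sector is governed by $\theta$ (a sine-type factor) rather than by $\theta_1\theta_2\theta_3$ (a cosine-type factor); this is precisely the change that turns the prefactor $(1+\overline{L})$ into $(1-\overline{L})$ and the coefficient $3$ of $c_1(X)_{S^1}^2$ into $1$. Concretely, I would replace $\Theta(X,L,\tau)$ by
\[
\Theta'(X,L,\tau):=\left(\bigotimes_{m=1}^\infty S_{q^m}(\widetilde{T_\CC X})\right)\otimes\left(\bigotimes_{r=1}^\infty \Lambda_{-q^r}(\widetilde{L_\CC})\right),
\]
and study the operator $D_{c,+}^{(1-\overline{L})\otimes\Theta'(X,L,\tau)\otimes(\varphi^8(\tau)\sum_{i=0}^\infty W_iq^i)}$. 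A short $q$-expansion, exactly in the style of (2.2)--(2.3), gives
\[
\Theta'\otimes\Big(\varphi^8(\tau)\sum_{i=0}^\infty W_iq^i\Big)=1+\big(W+T_\CC X-(L+\overline{L})-2k-6\big)q+O(q^2),
\]
so the degree-one term of the operator is $D_{c,+}^{(1-\overline{L})\otimes(W+T_\CC X-(L+\overline{L})-2k-6)}$, whose nontrivial part is exactly the combination in the statement (the constant rank summand $-2k-6$ contributing only rigid pieces).

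Next I would feed this into the Atiyah--Bott--Segal--Singer Lefschetz formula and invoke the $E_8$ character identity (1.16) to obtain, in analogy with (2.8), an equivariant index $I'(t,\tau)$ whose fixed-point summand is
\[
\frac{1}{(2\pi\ii)^{k}}\prod_{j=1}^k\frac{\theta'(0,\tau)}{\theta(\alpha_jt,\tau)}\cdot\frac{\theta(ct,\tau)}{\theta'(0,\tau)}\cdot\frac12\Big(\prod_{l=1}^8\theta(\beta_lt,\tau)+\prod_{l=1}^8\theta_1(\beta_lt,\tau)+\prod_{l=1}^8\theta_2(\beta_lt,\tau)+\prod_{l=1}^8\theta_3(\beta_lt,\tau)\Big),
\]
the one new feature being that the $L$-sector now contributes the single quotient $\theta(ct,\tau)/\theta'(0,\tau)$ (this is where $(1-\overline{L})e^{c_1(L)/2}$ cancels the $\sin(\pi ct)$ of $\theta$, using $\theta'(0,\tau)=2\pi\eta(\tau)^3$ from (1.5)). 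The computation (2.7) is unchanged, and the lattice bookkeeping of (2.9) carries over, now producing $\sum_l\beta_l^2+c^2-\sum_j\alpha_j^2=n$ from the hypothesis (coefficient $1$, not $3$, on $c^2$), so $I'$ is periodic of index $n/2$ and satisfies $I'(t,\tau+1)=I'(t,\tau)$. Holomorphicity of $I'$ on $\CC\times\HH$ follows verbatim from Lemma 2.1 and the pole-chasing argument, since $\theta(ct,\tau)$ now sits in the numerator and introduces no new poles.

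The decisive difference from Theorem 2.1 is the weight. Because $\theta(ct,\tau)/\theta'(0,\tau)$ transforms with weight $-1$ under $S$ (the denominator $\theta'(0,-1/\tau)$ carries the extra factor $\tau$), the $S$-transformation reads $I'(t/\tau,-1/\tau)=\tau^{k+3}e^{\pi\ii nt^2/\tau}I'(t,\tau)$, so $I'$ is a weak Jacobi form of weight $k+3$ and index $n/2$ over $(2\ZZ)^2\rtimes SL(2,\ZZ)$. The three cases then follow as before: for $n<0$, Lemma 2.2 forces $I'\equiv 0$; for $n=0$, index $0$ makes $I'$ independent of $t$, giving the asserted rigidity, while the $k$-even vanishing comes from the Atiyah--Singer index theorem at $g=\mathrm{id}$, where $(1-\overline{L})$ contributes $e^{c_1(L)/2}-e^{-c_1(L)/2}$, a class of degree $\equiv 2\pmod 4$; since $\widehat{A}(TX)$, $\mathrm{ch}(W)$, $\mathrm{ch}(T_\CC X)$ and $\mathrm{ch}(L+\overline{L})$ all live in degrees $\equiv 0\pmod 4$, the total index form has no degree-$2k$ component when $k$ is even. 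Finally, for $n=2$ the weight $k+3$ is odd exactly when $k$ is even, so Lemma 2.3 yields the vanishing.

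The main obstacle I anticipate is pinning down $\Theta'$ so that two requirements hold simultaneously: its order-$q^1$ coefficient reproduces precisely the virtual bundle in the statement, and its $L$-sector collapses to the single theta-quotient $\theta(ct,\tau)/\theta'(0,\tau)$ of weight $-1$. Both rest on the fixed-point identity $(1-\overline{L})e^{c_1(L)/2}\cdot\bigotimes_{r\ge1}\Lambda_{-q^r}(\widetilde{L_\CC})\mapsto 2\pi\ii\,\theta(ct,\tau)/\theta'(0,\tau)$, together with $\theta'(0,\tau)=2\pi\eta(\tau)^3$. Checking these cancellations, and confirming that the parity bookkeeping genuinely flips ``$k$ odd'' into ``$k$ even'' in both the Jacobi-form weight $k+3$ and the Atiyah--Singer degree count, is the one place demanding care; the remainder is formally identical to the proof of Theorem 2.1.
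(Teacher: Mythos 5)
Your proposal is correct and follows essentially the same route as the paper: the bundle you call $\Theta'$ is exactly the paper's $\Theta^*(X,L,\tau)=\left(\bigotimes_{m=1}^\infty S_{q^m}(\widetilde{T_\CC X})\right)\otimes\left(\bigotimes_{u=1}^\infty \Lambda_{-q^u}(\widetilde{L_\CC})\right)$, and your $q$-expansion, fixed-point formula (your $\theta(ct,\tau)/\theta'(0,\tau)$ agrees with the paper's $\theta(ct,\tau)/(\theta_1(0,\tau)\theta_2(0,\tau)\theta_3(0,\tau))$ up to the constant $\pi$ from the Jacobi identity (1.5)), the weight $k+3$ and index $n/2$ computation, the holomorphicity argument, and the three-case analysis via Lemmas 2.1--2.3 and the mod-4 degree count at $g=\mathrm{id}$ all coincide with the paper's proof of Theorem 2.2.
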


\begin{proof}
We will use same notations as in the proof of Theorem 2.1.

Let $\Theta^{\ast }(X, L, \tau)$ be the virtual
complex vector bundles over $X$ defined by
\begin{equation*}
\Theta ^{\ast }(X, L, \tau):=\left(
\overset{\infty }{\underset{m=1}{\otimes }}S_{q^{m}}(\widetilde{T_{\mathbf{C}}X})\right) \otimes \left( \overset{\infty }{\underset{u=1}{\otimes }}%
\Lambda _{-q^{u}}(\widetilde{L_\CC})\right) .
\end{equation*}

Consider the twisted operator
\be D_{c,+}^{(1-\overline{L})\otimes\Theta^*(X, L, \tau)\otimes (\varphi^8(\tau)\sum_{i=0}^\infty W_iq^i)}.\ee Expanding $q$-series, we have
\be
\begin{split}
&\Theta^*(X, L, \tau)\otimes (\varphi^8(\tau)\sum_{i=0}^\infty W_iq^i)\\
=&(1+(T_\CC X-2k)q+O(q^2))\otimes(1-\widetilde{L_\CC}q+O(q^2))\\
&\otimes (1-8q+O(q^2))\otimes (1+Wq+O(q^2))\\
=&1+(W+T_\CC X-(L+\overline{L})-2k-6)q+O(q^2).
\end{split}
\ee
So
\be
\begin{split}
&D_{c,+}^{(1-\overline{L})\otimes\Theta^*(X, L, \tau)\otimes (\varphi^8(\tau)\sum_{i=0}^\infty W_iq^i)}\\
=& D_{c,+}^{(1-\overline{L})}+D_{c,+}^{(1-\overline{L})\otimes(W+T_\CC X-(L+\overline{L})-2k-6) }q+O(q^2).
\end{split}
\ee

By the Atiyah-Bott-Segal-Singer Letschetz fixed point formula, for this twisted operator $ D_{c,+}^{(1-\overline{L})\otimes\Theta^*(X, L, \tau)\otimes (\varphi^8(\tau)\sum_{i=0}^\infty W_iq^i)}$, the equivariant index
\be
\begin{split}
J(t, \tau)=&2\sum_p \left\{\frac{1}{(2\pi \sqrt{-1})^k}\prod_{j=1}^k\frac{\theta'(0, \tau)}{\theta(\alpha_j t,\tau)}
\frac{\theta(ct, \tau)}{\theta_1(0,\tau)\theta_2(0,\tau)\theta_3(0,\tau)}\right.\\
 &\ \ \ \ \ \ \ \ \ \ \ \ \left.\cdot \varphi^8(\tau)\cdot \left(\sum_{i=0}^\infty \mathrm{ch}(W_i|_p)_{S^1}q^i\right)\right\}\\
 =&\sum_p \left\{\frac{1}{(2\pi \sqrt{-1})^k}\prod_{j=1}^k\frac{\theta'(0, \tau)}{\theta(\alpha_j t,\tau)}
\frac{\theta(ct, \tau)}{\theta_1(0,\tau)\theta_2(0,\tau)\theta_3(0,\tau)}\right.\\
&\ \ \ \ \ \ \ \ \ \left.\cdot \left( \prod_{l=1}^8\theta(\beta_l t,\tau)+\prod_{l=1}^8\theta_1(\beta_l t,\tau)+\prod_{l=1}^8\theta_2(\beta_l t,\tau)+\prod_{l=1}^8\theta_3(\beta_l t,\tau)\right)\right\}.
\end{split}
\ee

 As when restricted to fixed points, $\frac{1}{30}c_2(W)_{S^1}+c_1(L)_{S^1}^2-p_1(TX)_{S^1}$ is equal to $n\cdot \pi^*u^2$, then for each fixed point, we have
$$\sum_{l=1}^8\beta_l^2+c^2-\sum_{j=1}^k\alpha_j^2=n.$$ Therefore, similar to (2.9), one can show that for $a, b\in 2\ZZ$
\be J(t+a\tau+b, \tau)=e^{-\pi\sqrt{-1}n(b^2\tau+2b\tau)}J(t, \tau). \ee
One can also show that
\be J(t, \tau+1)=J(t, \tau)
\ee
and
\be J\left(\frac{t}{\tau}, -\frac{1}{\tau}\right)=\tau^{k+3}e^{\frac{\pi\sqrt{-1}nt^2}{\tau}}J(t, \tau).
\ee

So similar to $I(t, \tau)$ in the proof of Theorem 2.1, combing Lemma 2.1 and the above transformation laws, we can prove that $J(t, \tau)$ is a weak Jacobi form of index $\frac{n}{2}$ and weight $k+3$ over $(2\ZZ)^2\rtimes SL(2, \ZZ)$.

Then one can prove the three parts of Theorem 2.2 almost the same as those in Theorem 2.1. The only difference one needs to notice is that by the Atiyah-Singer index theorem,
$\mathrm{Ind}D_{c,+}^{(1-\overline{L})\otimes W}+\mathrm{Ind} D_{c,+}^{(1-\overline{L})\otimes (T_\CC X-(L+\overline{L}))}$ must be 0 when the dimension of the manifold is divisible by 4 as the index density of the operator
$$D_{c,+}^{(1-\overline{L})\otimes W}+D_{c,+}^{(1-\overline{L})\otimes (T_\CC X-(L+\overline{L}))} $$
is a differential form of degree $4l+2$.
\end{proof}
$$ $$

\noindent {\bf Acknowledgements.} The first author is partially supported by the Academic Research Fund R-146-000-163-112  from National University of
Singapore. The second author is partially supported by NSF. The
third author is partially supported by  MOEC and NNSFC.


\begin{thebibliography}{10}

\bibitem {At} M. F. Atiyah, $K-theory$. Benjamin, New York, 1967.

\bibitem {AH1} M. F. Atiyah and F. Hirzebruch, Spin manifolds and group actions. {\it Essays on Topology and Related Topics. Memoires d$\acute{e}$di$\acute{e}$s $\grave{a}$ Georges de Rham},  Springer (1970), 18-28.

\bibitem{BGV}  N. Berline, E. Getzler and M. Vergne, {\it Heat
Kernels and Dirac Operators}. Springer, 2002.

\bibitem{BT}  R. Bott and C. Taubes, On the rigidity theorem of Witten, {\it J. AMS}, 2 (1989), 137-186.

\bibitem{C} K. Chandrasekharan, \textit{Elliptic Functions}.
Springer-Verlag, 1985.

\bibitem{CHZ} Q. Chen, F. Han and W. Zhang, Generalized Witten genus and vanishing theorems, \textit{J.   Diff. Geom.} 88 (2011), 1-39.

\bibitem{De} A. Dessai, Rigidity for spin$^c$ manifolds, \textit{Topology} Vol. 39 (2000), 239-258.

\bibitem{DMW} E. Diaconescu, G. Moore and E. Witten, $E_8$ gauge theory and a derivation of $K$-Theory from M-Theory, {\it Adv. Theor. Math. Phys.} 6 (2003) 1031.


\bibitem{EZ} M. Eichler and D. Zagier, \textit{The Theory of Jacobi Forms}, Progress in Mathematics, 55, Birkhuser Boston, Inc., Boston, Mass, 1985.


\bibitem{GL} T. Gannon and C. S. Lam, Lattices and $\theta$-function identities. II. Theta series. {\it J. Math. Phys.} 33  (1992), 871-887.



\bibitem{Har} C. Harris, The index bundle for a family of Dirac-Ramond operators, {\it Ph.D thesis}, University of Miami, 2012.

\bibitem{Hat} A. Hattori, $Spin^c$-structures and $S^1$-actions, {\it Invent. Math.} 48 (1978) 7-31.


\bibitem {Hir1} F. Hirzebruch, Elliptic genera of level $N$ from complex manifolds, in {\it Differential Geometric Methods in Theoretic Physics},  Kluwer, Dordrecht, 1988, 37-63.


\bibitem{HW} P. Ho\v{r}ava and E. Witten, Heterotic and Type I string dynamics from eleven dimensions, {\it Nucl. Phys.} B460 (1996) 506.

\bibitem{HW1} P. Ho\v{r}ava and E. Witten, Eleven dimensional supergravity on a manifold with boundary, {\it Nucl. Phys.} B475 (1996) 94.

\bibitem{K1}  V. G. Kac,  An elucidation of : ``Infinite-dimensional algebras, Dedekind's
$\eta$-funtion, classical Moebius function and the very strange formula". $E_8{(1)}$ and the cube root of the modular function $j$. {\it Adv. in Math.}, 35(3): 264-273, 1980.

\bibitem{K2}  V. G. Kac, {\it Infinite-Dimensional Lie Algebras}, Cambridge University Press, Cambridge, 3rd edition, 1990.


\bibitem{Kr} I. Krichever,  Generalized elliptic genera and Baker-Akhiezer functions, {\it Math. Notes}, 47 (1990), 132-142.

\bibitem {La} P. S. Landweber, Elliptic cohomology and modular forms. in {\it
Elliptic Curves and Modular Forms in Algebraic Topology, } p.
55-68. Ed. P. S. Landweber. Lecture Notes in Mathematics Vol.
1326, Springer-Verlag (1988).

\bibitem {LS} P. S. Landweber and R.E. Stong, Circle actions on spin manifolds and characteristic numbers, {\it Topology}, 27 (1988), 145-161.

\bibitem{LM} H. B. Lawson and M.-L. Michelsohn, \textit{Spin Geometry}.
Princeton Univ. Press, 1989, MR 1031992, Zbl 0688.57001.

\bibitem{Liu1} K. Liu, On elliptic genera and theta functions, {\it Topology}, 35, (1996), 617-640.

\bibitem{Liu2} K. Liu, On modularity and rigidity theorems, {\it J. Diff. Geom.}, 41 (1995), 343-396.


\bibitem{LM1} K. Liu and X. Ma, On family rigidity theorems I, {\it Duke Math. J.}, 102 (2000), 451-474.

\bibitem{LM2} K. Liu and X. Ma, On family rigidity theorems for spin$^c$ manifolds, {\it Mirror Symmetry 4},
AMS/IP Studies in Advanced Mathematics, Vol. 28 (2002).

\bibitem{LMZ1} K. Liu, X. Ma and W. Zhang, Rigidity and vanishing theorems in $K$-theory, {\it Comm. Anal. Geom.}, 11(1), (2003), 121-180.

\bibitem{LMZ2} K. Liu, X. Ma and W. Zhang, Spin$^c$ manifolds and rigidity theorems in $K$-theory, {\it Asian J. Math.}, 4, (2000), 933-960.

\bibitem{Och1} S. Ochanine, Sur les genres multiplicatifs d
\'efinis par des int\'egrales elliptiques, {\it Topology}, 26 (1987), 143-151.



\bibitem{T} C. Taubes, $S^1$ actions and elliptic genera, {\it Comm. Math. Phys}, 122 (1989), 455-526.


\bibitem{W} E. Witten, The index of the Dirac operator in loop space, in \cite{La}.

\bibitem{W1} E. Witten, On flux quantization in M-Theory and the effective action, {\it J. Geom. Phys.}
22 (1997) 1-13.


\bibitem {Z} W. Zhang, {\it Lectures on Chern-Weil Theory and
Witten Deformations.} Nankai Tracts in Mathematics Vol. 4, World
Scientific, Singapore, 2001.


\end{thebibliography}
\end{document}